\DeclareMathOperator{\des}{des}
\newtheorem{theorem}{Theorem}[section]
\newtheorem{prop}{Proposition}[section]
\begin{document}

\title{Comparing dealing methods with repeating cards}

\author{
	M\'arton Bal\'azs
	\thanks{Alfr\'ed R\'enyi Institute of Mathematics, Hungarian Academy of Sciences; \texttt{balazs@math.bme.hu}; research partially supported by the Hungarian Scientific Research Fund (OTKA) grants K100473, by T\'AMOP - 4.2.2.B-10/1-2010-0009, and by the Bolyai Scholarship of the Hungarian Academy of Sciences.}
	\and
	D\'avid Zolt\'an Szab\'o
	\thanks{E\"{o}tv\"{o}s Lor\'and University; \texttt{szdavid89@gmail.com}. Part of this work was done while the authors were affiliated with the Institute of Mathematics, Budapest University of Technology and Economics.}}

\maketitle

\begin{abstract}
In a recent work Conger and Howald derived asymptotic formulas for the randomness, after shuffling, of decks with repeating cards or all-distinct decks dealt into hands. In the latter case the deck does not need to be fully randomized: the order of cards received by a player is indifferent. They called these cases the ``fixed source'' and the ``fixed target'' case, respectively, and treated them separately. We build on their results and mix these two cases: we obtain asymptotic formulas for the randomness of a deck of repeating cards which is shuffled and then dealt into hands of players. We confirm that switching from ordered to cyclic dealing, or from cyclic to back-and-forth dealing improves randomness in a similar fashion than in the non-repeating ``fixed target'' case. Our formulas allow to improve even the back-and-forth dealing when the deck only contains two types of cards.

\smallskip\noindent
{\bf Keywords:} Card shuffling, Dealing methods, Randomizing\\
{\bf MSC (2010):} 60C05, MSC 60J10
\end{abstract}

\section{Introduction}
In card-games a very important requirement is that, after shuffling, every hand dealt to players should have approximately the same probability. Therefore, the randomizing properties of the shuffling and dealing procedure is of essential interest.

In 1955 Gilbert and Shannon \cite{konyv} introduced the riffle shuffling as a mathematical model of card shuffling. In the 1980's Reeds \cite{cikk3} and Aldous \cite{cikk2} added the assumption that every possible cut/riffle combination is equally likely, and that has become known as the Gilbert-Shannon-Reeds or GSR model of card shuffling. First, the deck is cut into two packets of sizes $k$ and $n-k$ with probability $\frac{\binom{n}{k}}{2^n}$, $k=0\dots n$. After the cut the packets are combined together such that the cards of each packet maintain their relative order. It is assumed that each such interleaving is equally likely. As there are $\binom{n}{k}$ of them, any possible shuffling with a cut of size $k$ has probability $\frac{\binom{n}{k}}{2^n}\cdot \frac{1}{\binom{n}{k}}=\frac{1}{2^n}$. This probability does not depend on $k$, hence each pair of a cut and interleaving is equally likely.

In 1992, Bayer and Diaconis \cite{cikk5} generalized the riffle shuffling by introducing the $a$-shuffle to make the mixing problem easier. First the deck is cut into $a$ packets of sizes $p_1,\,p_2,\,\dots,\,p_a$, respectively, with probability $\frac{\binom{n}{p_1,p_2,\dots, p_a}}{a^n}$. Then the cards are interleaved such that the cards of each packet maintain their relative order, and each such interleaving is equally likely. It has been proved that making a random $a$-shuffle and then a random $b$-shuffle is equivalent to making a random $a\cdot b$ shuffle. In particular, this implies that a sequence of $i$ riffle shuffles is equivalent to a single $2^i$-shuffle.

Bayer and Diaconis used the variation distance:
\[
 \left|\left|\mathbb{P}_a-U\right|\right|:=\frac{1}{2}\sum_{\pi\in{S_n}}\left|\mathbb{P}_a(\pi)-U(\pi)\right|
\]
for their analysis, where $\mathbb{P}_a(\pi)$ is the probability of a particular permutation $\pi$ after an $a$-shuffle, $S_n$ is the symmetric group of degree $n$, $U$ represents the uniform distribution on permutations ($U$$(\pi)$=$\frac{1}{n!}$ for all $\pi\in{S_n}$), cards are distinct, and initially the deck is ordered (we start from the identity permutation). Bayer and Diaconis found an explicit formula for $\mathbb P_a$: \[\mathbb{P}_a(\pi)=\frac{1}{a^n}\binom{a+n-\des(\pi)-1}{n},\] where $n$ is the size of the deck and
\[
 \des(\pi):=\#\{i:\pi(i)>\pi(i+1)\}.
\]
In this paper we will consider permutations as a bijection from $\{1,2,...,k\}$ to itself, so if we apply $\pi$ to a sequence of objects, then object in position $i$ will move to position $\pi(i)$.  
This approach is illustrated via the next example: the permutation $\pi_1$=[43125] changes our initial ordering to 34215, as well as rearranging 25431 to 43521, and 53412 to 41352. It is easily checked that $\des(\pi_1)$=2.

An interesting generalization is when we allow some cards to have the same value. This makes our problem more complicated because decks (ordered sequences of cards) and transformations cannot be identified with permutations anymore. Indeed, there is a set of permutations for each pair of decks that transform the first deck into the second. Another novelty is that the initial configuration of a deck affects how fast the order of the cards approaches the uniform distribution. For a rearrangement $D'$ of $D$, let $S(D,D')$ be the set of permutations which transform $D$ into $D'$. The transition probability between $D$ and $D'$ is
\[
 \mathbb{P}_a(D \to D'):=\sum_{\pi\in{S(D,D')}}\mathbb{P}_a(\pi).
\]
Applying the above explicit formula, we arrive to
\[
 \mathbb{P}_a(D \to D')=\frac{1}{a^n}\sum_{d}b_d\binom{a+n-d-1}{n},
\]
where $b_d$ is the number of permutations in $S(D,D')$ with $d$ descents.

Conger and Viswanath \cite{cikk6} proved that the calculation of the transition probabilities is a \#P-complete problem. Most people believe that \#P-complete problems do not admit efficient solutions, so a possible way to examine this question is to approximate this probability. Conger and Howald \cite{cikk1} provided an approximation of the transition probabilities when $a$ is large. To describe their results we make some further definitions.
 
Let $a$ and $b$ be card values. We say that $D$ has an \emph{$a-b$ digraph at $i$}, if $D(i)=a$ and $D(i+1)=b$. We say that $D$ has an \emph{$a-b$ pair at ($i,j$)}, if $i<j$, $D(i)=a$, and $D(j)=b$. Let
\[
 \begin{aligned}
  W(D,a,b):&=\#\{a-b\text{ digraphs in }D\}-\#\{b-a\text{ digraphs in }D\},\\
  Z(D,a,b):&=\#\{a-b \text{ pairs in } D\}-\#\{b-a \text{ pairs in } D\}.
 \end{aligned}
\]
 As an example, the following deck, that consists of red ($R$) and black ($B$) cards, has 1 $R-B$ digraph, 2 $B-R$ digraphs, 12 $R-B$ pairs, 13 $B-R$ pairs:
\[
 D:=BRRRBBBBRR,
\]
and $W(D,B,R)=2-1=1$, $Z(D,B,R)=13-12=1$. Clearly, $W$ and $Z$ are antisymmetric in $a$ and $b$:
\[
 W(D,a,b)=-W(D,b,a),\qquad Z(D,a,b)=-Z(D,b,a).
\]

Conger and Howald \cite{cikk1} proved that
\begin{equation}\label{1}
 \mathbb{P}_a(D\to D')=\frac{1}{N}+c_1(D,D')a^{-1}+O(a^{-2}),
\end{equation}
where $N$ is the number of different reorderings of the deck represented by $D$, and 
\begin{equation}\label{2}
 c_1(D,D')=\frac{n}{2N}\sum_{a<b}\frac{W(D,a,b)Z(D',a,b)}{n_a n_b},
\end{equation}
with $n_a$ being the number of cards of value $a$, $n_b$ the number of cards of value $b$. They analyzed the behaviour of this formula in the case of repeated cards, where the complete order of the deck matters after the shuffling (``fixed source'' case). They also looked at shuffling and dealing into hands of all distinct cards where, on the other hand, only the cards dealt to players matter, but the order within a player is indifferent (``fixed target'' case). As a consequence it is shown that in the latter case with 52 distinct cards, switching from ordered dealing to cyclic dealing improves the randomness by a factor of 13, and switching from cyclic dealing to back and forth dealing again improves the randomness by a factor of 13.

Assaf, Diaconis and Soundarajan \cite{cikk7} also analyzed decks with repeated cards if only certain features are of interest, for instance, suits disregarded or only the colors of interest. For these features the number of shuffles drops in a significant rate.

In this paper we build on \eqref{1} and combine the above two cases: we derive a formula for the effectiveness of a dealing method when there are repeated cards in the deck, and only the hands dealt to players are of interest. Similarly to the all-distinct case we prove that, in this first-order approximation, switching from ordered dealing to cyclic dealing improves the randomness by a factor of $s$, $s$ being the number of cards each player receives. Switching from cyclic dealing to back and forth dealing improves the randomness by a factor of $s$ when $s$ is an odd number, while the coefficient $c_1(D,D')$ disappears for even $s$ values. Our formula becomes explicit enough so that for two types of cards and odd $s$ values we come up with a dealing method that is even better than back and forth.

The organization of the paper is as follows. In Section 2 we further introduce some notation and apply \eqref{1} to our case of repeated cards in the deck. In Section 3 we analyze the role of dealing methods with repeated cards, and arrive to a key formula in (\ref{6}) we can build on later. For simplicity, this is done with four players. In Section 4 we generalize the result to an arbitrary number of players, and compare the effectiveness of the ordered, the cyclic, and the back and forth dealing. In section 5 we briefly deal with the cases of non-ordered initial decks. Section 6 provides explicit computations when there are only two or three types of cards.

\section{The basics of our model}
We start with a deck of $4s$ cards. These cards can be repeated, their values (colours) are taken from the $k$-element set $\{P_1,\,P_2,\,\cdots,\,P_{k}\}$. The initial deck is ordered: the first $p_1$ cards are $P_1$ coloured, the next $p_2$ cards are $P_2$ coloured, $\dots$, and the last $p_k$ cards are $P_k$ coloured ($\sum_{i=1}^{k}p_i=4s$). An $a$-shuffle is performed on the deck, and then it is dealt to four players, called North(N), East(E), South(S) and West(W), respectively. The set $\Omega$ of hands consists of the vectors
\[
 \bar{p_i}=(p_{i,N},p_{i,E},p_{i,S},p_{i,W}),\qquad i=1\dots k,
\]
where $p_{i,N}$, $p_{i,E}$, $p_{i,S}$, $p_{i,W}$ is the number of $P_i$ coloured cards received by North, East, South, West, respectively. These numbers are non-negative integers, and satisfy
\[
 \begin{aligned}
  \sum_{i=1}^kp_{i,N}=\sum_{i=1}^kp_{i,E}=\sum_{i=1}^kp_{i,S}=\sum_{i=1}^kp_{i,W}&=s,\\
  p_{i,N}+p_{i,E}+p_{i,S}+p_{i,W}&=p_i,\qquad i=1\dots k.
 \end{aligned}
\]
Define $\Pi(\omega)$ as the stationary distribution, which is in fact uniform on $\Omega$:
\[
 \Pi(\omega)=\frac{s!^4}{(4s)!} \prod_{i=1}^k\frac{p_i !}{p_{i,N}!\cdot p_{i,E}!\cdot p_{i,S}!\cdot p_{i,W}!},\qquad\forall\omega\in\Omega.
\]
We use the variation distance as a level of randomness of the hands after an $a$-shuffling and dealing:
\[
 \begin{aligned}
  \left|\left|\mathbb{P}_a-\Pi\right|\right|:&=\frac{1}{2}\sum_{\omega\in{\Omega}}\left|\mathbb{P}_a(\omega)-\Pi(\omega)\right|\\
  &=\frac{1}{2}\sum_{\omega\in{\Omega}}\left|\sum_{D':(\bar{p_1},\bar{p_2},\dots,\bar{p_k})=\omega}\mathbb{P}_a(D\to D')-\Pi(\omega)\right|.
 \end{aligned}
\]
Here we suppose that $D$, the initial sequence of cards, is ordered, and $D'$ is some rearrangement of $D$. The inner sum is for all $D'$'s that give hand $\omega$ after the dealing. It is easy to see that this sum has
\begin{equation}\label{3}
 |D':(\bar{p_1},\bar{p_2},\cdots,\bar{p_k})=\omega|=s!^4 \prod_{i=1}^{k}\frac{1}{p_{i,N}!\cdot p_{i,E}!\cdot p_{i,S}!\cdot p_{i,W}!}
\end{equation}
terms. For computing $\mathbb{P}_a(D \to D')$ we use \eqref{1}, with $N=\binom{4s}{p_1,p_2,\dots,p_k}=\frac{(4s)!}{p_1!\cdot p_2!\cdots p_k!}$, and also \eqref{2}:
\begin{multline*}
\sum_{D':(\bar{p_1}, \bar{p_2}, \cdots, \bar{p_k})=\omega}\mathbb{P}_a(D \to D')\\
\begin{aligned}
&=\sum_{D':(\bar{p_1}, \bar{p_2}, \cdots, \bar{p_k})=\omega}\frac{1}{N}+a^{-1}\sum_{D':(\bar{p_1}, \bar{p_2}, \cdots, \bar{p_k})=\omega}c_1(D,D')\\
&\quad+\sum_{D':(\bar{p_1}, \bar{p_2}, \cdots, \bar{p_k})=\omega}O(a^{-2})\\
&=\Pi(\omega)+a^{-1}\!\!\sum_{D':(\bar{p_1}, \bar{p_2}, \cdots, \bar{p_k})=\omega}\frac{2s}{(4s)!}\Biggl(\prod_{j=1}^{k} p_j!\Biggl)\Biggl(\sum_{a<b}\frac{W(D,a,b)Z(D',a,b)}{n_a n_b}\Biggl)\\
&\quad+O(a^{-2})\\
&=\Pi(\omega)+a^{-1}\frac{2s}{(4s)!}\Biggl(\prod_{j=1}^{k}p_j!\Biggl)\Biggl(\sum_{i=1}^{k-1}\frac{\sum_{D':(\bar{p_1}, \bar{p_2}, \cdots, \bar{p_k})=\omega}Z(D',P_i,P_{i+1})} {p_i\cdot p_{i+1}}\Biggl)\\
&\quad+O(a^{-2}),
\end{aligned}
\end{multline*}
because $W(D,P_i,P_{i+1})=1$ ($i=1,\cdots,k-1$), and $W(D,A,B)=0$ if $B\neq (A+1)$ by virtue of the initial deck. Thus the variation distance becomes
\begin{multline}\label{5}
 \frac{1}{2}\sum_{\omega \in{\Omega}}\left|\mathbb{P}_a(\omega)-\Pi(\omega)\right|\\
 =\!a^{-1}\frac{s}{(4s)!}\Biggl(\prod_{j=1}^{k}p_j!\Biggl)\sum_{\omega \in{\Omega}}\left|\sum_{i=1}^{k-1}\frac{\sum_{D':(\bar{p_1}, \bar{p_2}, \cdots, \bar{p_k})=\omega}Z(D',P_i,P_{i+1})} {p_i\cdot p_{i+1}}\right|+O(a^{-2}).
\end{multline}

\section{Dealing methods}
Next we consider the role of the dealing methods. The dealer $a$-shuffles the deck and then deals it to the four players. We can describe the dealing method as a sequence of repeating letters $N$, $E$, $S$ and $W$, representing the order in which players receive the cards of the shuffled deck. The sequence that corresponds to the \emph{ordered dealing} is
\[
 NNN\dots NN\,EEE\dots EE\,SSS\dots SS\,WWW\dots WW.
\]
Here the first $s$ cards go to North, the next $s$ cards go to East, the next $s$ cards go to South and the last $s$ cards go to West. The next famous dealing method is the \emph{cyclic dealing} with
\[
 NESWNESWNESW\dots NESWNESW,
\]
where the top card goes to North, the second goes to East, the third goes to West, etc. The \emph{back and forth dealing} for even $s$ values is represented by
\[
 NESWWSENNESWWSEN\dots NESWWSEN,
\]
while for odd $s$ values we can write
\[
 NESWWSENNESWWSEN\dots NESW.
\]
Next we suppose that in the initial deck the first type is black (B), the second type is red (R). Let $b$ and $r$ denote the number of black and red cards in the deck, respectively, and let $p$ be the number of non-red and non-black cards. Let $N_p$, $E_p$, $S_p$, $W_p$ be the $p^\text{th}$ position, $p=1\dots s$, that goes to North, East, South, West, respectively in the dealing method. Let $p_N$, $p_E$, $p_S$, $p_W$ be the number of non-red and non-black cards that North, East, South, West has respectively after dealing. Let $b_N$, $b_E$, $b_S$, $b_W$ be the number of black cards that North, East, South, West has respectively after the dealing.

\begin{prop}\label{6}
\begin{multline*}
 \sum_{D':(\bar{b}, \bar{r}, \bar{p_3}, \bar{p_4}, \cdots, \bar{p_k})=\omega}Z(D',B,R)\\
 \begin{aligned}
  &=s!^4 \Bigl(\prod_{i=1}^{k}\frac{1}{p_{i,N}!\cdot p_{i,E}!\cdot p_{i,S}!\cdot p_{i,W}!}\Bigl)\cdot\Bigl((4s+1)b\\
  &\quad+\frac{b_N}{s}\Bigl(\frac{p_E\cdot Z(E,N)+p_S\cdot Z(S,N)+p_W\cdot Z(W,N)}{s}-2\sum_{p=1}^s N_p\Bigl)\\
  &\quad+\frac{b_E}{s}\Bigl(\frac{p_N\cdot Z(N,E)+p_S\cdot Z(S,E)+p_W\cdot Z(W,E)}{s}-2\sum_{p=1}^s E_p\Bigl)\\
  &\quad+\frac{b_S}{s}\Bigl(\frac{p_N\cdot Z(N,S)+p_E\cdot Z(E,S)+p_W\cdot Z(W,S)}{s}-2\sum_{p=1}^s S_p\Bigl)\\
  &\quad+\frac{b_W}{s}\Bigl(\frac{p_N\cdot Z(N,W)+p_E\cdot Z(E,W)+p_S\cdot Z(S,W)}{s}-2\sum_{p=1}^s W_p\Bigl)\Bigl),
 \end{aligned}
\end{multline*}
where
\begin{equation}
 \begin{aligned}
  Z(i,j)&=\#\{i-j \text{ pairs in the representing sequence of the dealing method}\}\\
	&-\#\{j-i \text{ pairs in the representing sequence of the dealing method}\}.
 \end{aligned}\label{eq:thmz}
\end{equation}
\end{prop}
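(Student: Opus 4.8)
The plan is to expand $Z(D',B,R)$ as a signed sum over ordered pairs of positions and to interchange this sum with the sum over all rearrangements $D'$ consistent with $\omega$. Writing $\mathrm{sgn}(j-i)$ for the sign of $j-i$, one has $Z(D',B,R)=\sum_{i,j}\mathbf 1\{D'(i)=B\}\,\mathbf 1\{D'(j)=R\}\,\mathrm{sgn}(j-i)$, the sum running over ordered pairs of distinct positions, since a black at $i$ and a red at $j$ form a $B$–$R$ pair exactly when $i<j$ and an $R$–$B$ pair when $i>j$. Interchanging gives $\sum_{D'}Z(D',B,R)=\sum_{i\neq j}\mathrm{sgn}(j-i)\,M(i,j)$, where $M(i,j)$ counts the rearrangements consistent with $\omega$ that place a black card at $i$ and a red card at $j$. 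The proof then reduces to evaluating $M(i,j)$ and performing the resulting lattice sums.

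First I would compute $M(i,j)$ by exploiting that, once $\omega$ is fixed, the cards of each player are placed uniformly and independently among that player's $s$ dealt positions. Let $\mathrm{pl}(i)$ denote the player owning position $i$ and let $T=s!^4\prod_i(\prod_{\mathrm{pl}}p_{i,\mathrm{pl}}!)^{-1}$ be the total number of consistent decks from \eqref{3}. If $\mathrm{pl}(i)=\mathrm{pl}(j)=X$ then $M(i,j)=T\,b_X r_X/(s(s-1))$, whereas if $\mathrm{pl}(i)=X\neq Y=\mathrm{pl}(j)$ then independence across players gives $M(i,j)=T\,b_X r_Y/s^2$. The same-player case contributes nothing, because summing $\mathrm{sgn}(j-i)$ over ordered pairs inside a single player vanishes by antisymmetry. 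Grouping the surviving cross-player pairs by the ordered pair of players $(X,Y)$ and recognizing that $\sum_{i\in X,\,j\in Y}\mathrm{sgn}(j-i)$ is exactly the dealing-sequence statistic $Z(X,Y)$ of \eqref{eq:thmz}, I obtain $\sum_{D'}Z(D',B,R)=\tfrac{T}{s^2}\sum_{X\neq Y}b_X r_Y\,Z(X,Y)$.

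It then remains to massage this into the stated form. The decisive move is the substitution $r_Y=s-b_Y-p_Y$ (valid since each player holds $s$ cards), which converts the red-based expression into one involving the $p_Y$ and splits the sum into three pieces. The piece carrying $b_Xb_Y$ vanishes because $Z(X,Y)=-Z(Y,X)$ is antisymmetric while $b_Xb_Y$ is symmetric. The piece carrying $b_Xp_Y$, after applying $Z(X,Y)=-Z(Y,X)$, reassembles into the terms $\tfrac{b_X}{s}\cdot\tfrac{1}{s}\sum_{Y\neq X}p_Y Z(Y,X)$, which are precisely the $p$-dependent parts of the statement. The remaining piece $\tfrac{1}{s}\sum_X b_X\sum_{Y\neq X}Z(X,Y)$ is evaluated using $\sum_{j\neq i}\mathrm{sgn}(j-i)=4s-2i+1$ together with the vanishing of the within-player sum, giving $\sum_{Y\neq X}Z(X,Y)=s(4s+1)-2\sum_{p=1}^s X_p$; this yields the global term $(4s+1)b$ (as $\sum_X b_X=b$) and the summands $-2\tfrac{b_X}{s}\sum_p X_p$. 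Factoring out $T$ reproduces the proposition.

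The main obstacle is not any single computation but the bookkeeping: keeping the player-labelled statistics $Z(X,Y)$ consistently oriented, and seeing that the natural red-based formula matches the stated $p$-based formula only after the substitution $r_Y=s-b_Y-p_Y$ and the cancellation of the symmetric $b_Xb_Y$ term. A secondary care point is justifying the independence-across-players and uniform-within-player counts for $M(i,j)$, and checking that degenerate cases such as $b_X=0$ or $r_Y=0$ are handled automatically by these product formulas.
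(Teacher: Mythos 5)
Your argument is correct, and it reaches the proposition by a genuinely different decomposition than the paper. The paper first proves the single-sum identity $Z(D',B,R)=\sum_{i=1}^{4s}\bigl(4s+1-2i+(\text{non-red/non-black before }i)-(\text{non-red/non-black after }i)\bigr)\mathbf{1}\{D'(i)=B\}$ via an incremental card-flipping argument (start from an all-red deck and track the change in $Z$ as cards are recoloured), and then averages this over the consistent $D'$ by introducing a uniform auxiliary measure and computing conditional expectations of the indicator that position $j$ is non-red and non-black given a black card at position $i$. You instead expand $Z(D',B,R)=\sum_{i\neq j}\mathbf{1}\{D'(i)=B\}\mathbf{1}\{D'(j)=R\}\,\mathrm{sgn}(j-i)$, interchange sums, and count the decks $M(i,j)$ directly via the product structure of the set $\{D':(\bar b,\bar r,\dots)=\omega\}$ across players; the same-player terms die by antisymmetry, $\sum_{i\in X,j\in Y}\mathrm{sgn}(j-i)$ is recognized as $Z(X,Y)$, and the passage from the natural red-based formula $\frac{T}{s^2}\sum_{X\neq Y}b_Xr_YZ(X,Y)$ to the stated $p$-based one is done at the end by substituting $r_Y=s-b_Y-p_Y$, killing the symmetric $b_Xb_Y$ piece, and evaluating $\sum_{Y\neq X}Z(X,Y)=s(4s+1)-2\sum_{p=1}^sX_p$. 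All of these steps check out (in particular the counts $M(i,j)=Tb_Xr_Y/s^2$ for distinct players and $Tb_Xr_X/(s(s-1))$ within a player follow from \eqref{3}, and the closing lattice sums are right). What your route buys is the elimination of both the flipping derivation and the conditional-expectation bookkeeping in favour of elementary double counting, with the dealing-sequence statistic $Z(X,Y)$ appearing immediately rather than being assembled from per-position probabilities; the paper's route, by contrast, makes the $(4s+1)b$ term and the overall shape of the answer visible from the outset, and its intermediate identity \eqref{4} is reused conceptually in the $\ell$-player generalization. Your red-to-$p$ substitution at the end is the one place where sign conventions could silently go wrong, but you have oriented $Z(Y,X)$ correctly to match the statement.
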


\begin{proof}
First we will prove that for a particular permutation $D'$:
\begin{multline}
 Z(D',B,R)=\sum_{i=1}^{4s}(4s+1-2i\\
 \begin{aligned}
  &\quad+\text{(the number of non-red and non-black cards in $D'$}\\
  &\qquad\qquad\text{before the $i^{\text{th}}$ position)}\\
  &\quad-\text{(the number of non-red and non-black cards in $D'$}\\
  &\qquad\qquad\text{after the $i^{\text{th}}$ position)})\\
  &\quad\qquad\cdot\mathbf{1}\{\text{in the $i^{\text{th}}$ position there is a black card in permutation $D'$}\}.
 \end{aligned}\label{4}
\end{multline}
If we change in the position $i$ the value from red to black in a deck $D'$, then we can compute the change in the value $Z(D',B,R)$. Within the first $i-1$ cards, denote
\begin{itemize}
 \item by $A$ the number of black cards;
 \item by $C$ the number of red cards;
 \item by $G$ the number of non-red and non-black cards.
\end{itemize}
Suppose that the card in the position $i$ is red coloured. Furthermore, within the last $4s-i$ cards, denote
\begin{itemize}
 \item by $D$ the number of black cards;
 \item by $F$ the number of red cards;
 \item by $H$ the number of non-red and non-black cards.
\end{itemize}
Then we have $A+C+G=i-1$ and $D+F+H=4s-i$, and the card in the position $i$ stands in $\{B-R \text{ pairs}\}$ with those $A$ cards within the first $i-1$ cards which are black coloured; and stands in $\{R-B \text{ pairs}\}$ with those $D$ cards within the last $4s-i$ cards which are black coloured. If we change in the position $i$ the value from red to black we get that the change of the value $Z(D',B,R)$ is
\[
 \begin{aligned}
  &-A-C+D+F=-(i-1)+G+4s-i-H=4s+1-2i\\
  &+\text{(the number of non-red and non-black cards in $D'$ before the $i^{\text{th}}$ position)}\\
  &-\text{(the number of non-red and non-black cards in $D'$ after the $i^{\text{th}}$ position)}.
 \end{aligned}
\]
Based on this observation we now build up the value of $Z(D',B,R)$ recursively. We start with an all-red deck, in which the value of $Z(\cdot,B,R)$ is 0. First we flip from this deck all the non-black and non-red cards of $D'$. This does not change $Z(\cdot,B,R)$. Next we flip from red all the black cards of $D'$. Adding the changes in $Z(\cdot,B,R)$, we are lead exactly to \eqref{4}.

We introduce $U(i):=(\,$the number of non-red and non-black cards in $D'$ before the $i^{\text{th}}$ position$-$the number of non-red and non-black cards in $D'$ after the $i^{\text{th}}$ position$-2i$), and proceed with
\[
 \begin{aligned}
  &\sum_{D':(\bar{b}, \bar{r}, \bar{p_3}, \bar{p_4}, \cdots, \bar{p_k})=\omega} Z(D',B,R)=\sum_{D':(\bar{b}, \bar{r}, \bar{p_3}, \bar{p_4}, \cdots, \bar{p_k})=\omega}(4s+1)b\\
 +&\sum_{D':(\bar{b}, \bar{r}, \bar{p_3}, \bar{p_4}, \cdots, \bar{p_k})=\omega}\sum_{i=1}^{4s}(U(i))\\
  &\quad\cdot\mathbf{1}\{\text{in the $i^{\text{th}}$ position there is a black card in permutation $D'$}\}.
 \end{aligned}
\]
In order to compute $\sum_{D'}(U(i))$, we introduce an auxiliary uniform measure on the permutations. With the help of this measure we handle the sum as a conditional expectation of the random variable $U(i)$, a function of the permutation.
\begin{multline*}
 \begin{aligned}
  &\sum_{D':(\bar{b}, \bar{r}, \bar{p_3}, \bar{p_4}, \cdots, \bar{p_k})=\omega}(U(i))\\
  &\quad\cdot\mathbf{1}\{\text{in the $i^{\text{th}}$ position there is a black card in permutation $D'$}\}\\
  &=\Bigl(|D':(\bar{b}, \bar{r}, \bar{p_3}, \bar{p_4}, \cdots, \bar{p_k})=\omega, \text{in the $i^{\text{th}}$ position there is a black card}|\\
  &\quad\cdot\mathbf{E}(U(i)|\text{in the $i^{\text{th}}$ position there is a black card in permutation $D'$,}\\
 \end{aligned}\\
 (\bar{b}, \bar{r}, \bar{p_3}, \bar{p_4}, \cdots, \bar{p_k})=\omega)\Bigl),
\end{multline*}
for $i=1,\dots,4s$. The justification of this equality is that for each permutation $D'$ with $(\bar{b}, \bar{r}, \bar{p_3}, \bar{p_4}, \cdots, \bar{p_k})=\omega$ we sum up the value of $U(i)$ if position $i$ is black in permutation $D'$.
In 
\begin{multline*}
 \mathbf{E}(U(i)|\text{in position $i$ there is a black card in permutation }D',\\
 (\bar{b},\bar{r},\bar{p_3},\bar{p_4},\cdots,\bar{p_k})=\omega),
\end{multline*}
we also sum up these terms, but we divide each term by 
\[
 |D':(\bar{b},\bar{r},\bar{p_3},\bar{p_4},\cdots,\bar{p_k})=\omega,\text{ in position $i$ there is a black card}|.
\]
With this substitution we arrive to
\begin{multline}
 \sum_{D': (\bar{b}, \bar{r}, \bar{p_3}, \bar{p_4}, \cdots, \bar{p_k})=\omega}Z(D',B,R)\\
 \begin{aligned}
  &=|D':(\bar{b}, \bar{r}, \bar{p_3}, \bar{p_4}, \cdots, \bar{p_k})=\omega|(4s+1)b\\
  &+\!\Bigl(\sum_{i=1}^{4s}|D':(\bar{b}, \bar{r}, \bar{p_3}, \bar{p_4}, \cdots, \bar{p_k})=\omega, \text{in the $i^{\text{th}}$ position there is a black card}|\\
  &\quad\cdot\mathbf{E}(U(i)|\text{in the $i^{\text{th}}$ position there is a black card in permutation }D',\\
 \end{aligned}\\
 (\bar{b}, \bar{r}, \bar{p_3}, \bar{p_4}, \cdots, \bar{p_k})=\omega)\Bigl).\label{eq:mainsum}
\end{multline}
The dealing method determines which player receives the card in position $i$. Suppose it is player North and in the $i^{\text{th}}$ position there is a black card. Then
\[
 |D':(\bar{b},\bar{r},\bar{p_3},\bar{p_4},\cdots,\bar{p_k})=\omega|=b_N\cdot(s-1)!\cdot s!^3\prod_{j=1}^{k}\frac{1}{p_{j,N}!\cdot p_{j,E}!\cdot p_{j,S}!\cdot p_{j,W}!}.
\]
If position $i$ belongs to player East, South, or West, then the above formula holds with $b_N$ replaced by $b_E$, $b_S$, or $b_W$, respectively. Compare this to \eqref{3} to get
\begin{multline}
 \begin{aligned}
  &|D':(\bar{b}, \bar{r}, \bar{p_3}, \bar{p_4}, \cdots, \bar{p_k})=\omega, \text{in the $i^{\text{th}}$ position there is a black card}|\\
 =&|D':(\bar{b}, \bar{r}, \bar{p_3}, \bar{p_4}, \cdots, \bar{p_k})=\omega|
 \end{aligned}\\
 \begin{aligned}
 \cdot\Bigl(\frac{b_N}{s}&\mathbf{1}\{\text{position $i$ belongs to player North in the dealing method}\}\\
	   +\frac{b_E}{s}&\mathbf{1}\{\text{position $i$ belongs to player East in the dealing method}\}\\
	   +\frac{b_S}{s}&\mathbf{1}\{\text{position $i$ belongs to player South in the dealing method}\}\\
	   +\frac{b_W}{s}&\mathbf{1}\{\text{position $i$ belongs to player West in the dealing method}\} \Bigl).
 \end{aligned}\label{eq:iind}
\end{multline}
Next we turn to computing the conditional expectation.
\begin{equation}
 \begin{aligned}
  &\mathbf{E}(U(i)|\,\text{in the }i^{\text{th}}\text{ position there is a black card in permutation }D',\\
  &\quad\qquad(\bar{b}, \bar{r}, \bar{p_3}, \bar{p_4}, \cdots, \bar{p_k})=\omega)\\
  &=\sum_{j=1}^{i-1}\mathbf{P}(\text{position }j\text{ is non-red and non-black}\\
  &\qquad|\,\text{in the }i^{\text{th}}\text{ position there is a black card in permutation }D',\\
  &\quad\qquad(\bar{b},\bar{r},\bar{p_3},\bar{p_4},\cdots,\bar{p_k})=\omega)\\
  &-\sum_{j=i+1}^{4s}\mathbf{P}(\text{position }j\text{ is non-red and non-black}\\
  &\qquad|\,\text{in the }i^{\text{th}}\text{ position there is a black card in permutation }D',\\
  &\quad\qquad(\bar{b},\bar{r},\bar{p_3},\bar{p_4},\cdots,\bar{p_k})=\omega)-2i.
 \end{aligned}\label{eq:jsum}
\end{equation}
Suppose now that position $i$ belongs to player North in the dealing method. Then one of these probabilities can be computed in the following way:
\begin{multline}
 \begin{aligned}
  &\mathbf{P}(\text{position $j$ is non-red and non-black}\\
  &\quad|\,\text{in the $i^{\text{th}}$ position there is a black card in permutation $D'$},\\
  &\qquad(\bar{b}, \bar{r}, \bar{p_3}, \bar{p_4}, \cdots, \bar{p_k})=\omega)
 \end{aligned}\\
 \begin{aligned}
  =\frac{p_N}{s-1}&\mathbf{1}\{\text{position $j$ belongs to player North in the dealing method}\}\\
  +\frac{p_E}{s}&\mathbf{1}\{\text{position $j$ belongs to player East in the dealing method}\}\\
  +\frac{p_S}{s}&\mathbf{1}\{\text{position $j$ belongs to player South in the dealing method}\}\\
  +\frac{p_W}{s}&\mathbf{1}\{\text{position $j$ belongs to player West in the dealing method}\}.
 \end{aligned}\label{eq:jind}
\end{multline}
Now combine \eqref{eq:mainsum}, \eqref{eq:iind}, \eqref{eq:jsum} and \eqref{eq:jind} with the definition \eqref{eq:thmz} of $Z(i,j)$ (and notice that $Z(N,N)$ is trivially 0 for any dealing method) to conclude
\[
 \begin{aligned}
  &\Bigl(\sum_{i=1}^{s}|D': (\bar{b}, \bar{r}, \bar{p_3}, \bar{p_4}, \cdots, \bar{p_k})=\omega,\\
  &\qquad\text{in the $i^{\text{th}}$ position there is a black card in permutation $D'$}\\
  &\qquad\quad\text{and position $i$ belongs to player North}|\\
  &\quad\cdot\mathbf{E}(U(i)|\text{in the $i^{\text{th}}$ position there is a black card in permutation $D'$}\\
  &\qquad\quad\text{and position $i$ belongs to player North, }(\bar{b}, \bar{r}, \bar{p_3}, \bar{p_4}, \cdots, \bar{p_k})=\omega)\Bigl)\\
  &\quad=|D': (\bar{b}, \bar{r}, \bar{p_3}, \bar{p_4}, \cdots, \bar{p_k})=\omega|\\
  &\qquad\cdot\Bigl(\frac{b_N}{s}\Bigl(\frac{p_E\cdot Z(E,N)+p_S\cdot Z(S,N)+p_W\cdot Z(W,N)}{s}-2\sum_{p=1}^s N_p\Bigl)\Bigl).
 \end{aligned}
\]
Here we only considered those positions that belong to player North. Repeating the computation with positions that go to the other players we arrive to the statement of the proposotion (see also \eqref{3}).
\end{proof}

\section{The case of more players}
The generalization to the case of $\ell$ players and $\ell \cdot s$ cards of $k$ different colours is straightforward. The analogue of \eqref{5} now reads as:
\begin{multline*}
 \left|\left|\mathbb{P}_a-\Pi\right|\right|=a^{-1}\frac{s\cdot \Biggl(\prod_{j=1}^{k}p_j!\Biggl)}{(\ell s)!}\\
 \cdot\sum_{\omega \in{\Omega}}\left|\sum_{i=1}^{k-1}\frac{\sum_{D':(\bar{p_1}, \bar{p_2}, \cdots, \bar{p_k})=\omega}Z(D',P_i,P_{i+1})} {p_i\cdot p_{i+1}}\right|+O(a^{-2}),
\end{multline*}
where $p_1,p_2,\cdots,p_k$ are the number of cards coloured $P_1,P_2,\cdots,P_k$ in the deck. The main point is again the calculation of $\sum_{D':(\bar{p_1}, \bar{p_2}, \cdots, \bar{p_k})=\omega}Z(D',X, Y)$, where $X$ and $Y$ are two different types. Let $x_i$ be the number of $X$ coloured cards which player $i$ is dealt, and $x$ be the total number of cards of value $X$. Let $p_{o,j}$ be the number of $P_o$ coloured cards which player $j$ receives. (\ref{6}) generalizes to

\begin{prop}
\begin{multline*}
 \sum_{D':(\bar{p_1}, \bar{p_2}, \cdots, \bar{p_k})=\omega}Z(D',X, Y)\\
 \begin{aligned}
 &=|D':(\bar{p_1}, \bar{p_2}, \cdots, \bar{p_k})=\omega|\\
 &\quad\cdot\!\Bigl((\ell s+1)x\!+\!\frac{1}{s}\!\sum_{i=1}^{\ell}x_i\Bigl(\!\!\sum_{j=1, j\neq i}^{\ell}\!\!\frac{((\sum_{o=1}^k p_{o,j})-x_j-y_j) \cdot Z(j,i)}{s}-2\sum_{t=1}^s i_t\Bigl)\Bigl)\\
 &=|D':(\bar{p_1}, \bar{p_2}, \cdots, \bar{p_k})=\omega|\\
 &\quad\cdot\Bigl(\sum_{i=1}^{\ell}x_i \Bigl(\ell s+1-2\frac{\sum_{t=1}^{s}i_t}{s}+\!\!\sum_{j=1, j\neq i}^{\ell}\!\!\frac{((\sum_{o=1}^k p_{o,j})-x_j-y_j) \cdot Z(j,i)}{s^2}\Bigl) \Bigl),
 \end{aligned}
\end{multline*}
where $i_t$ is the $t^\text{th}$ position that goes to player $i$, $t=1\dots s$.
\end{prop}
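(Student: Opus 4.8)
The plan is to repeat, in the $\ell$-player setting, the argument that proved Proposition \ref{6}: the named players $N,E,S,W$ become the generic players $1,\dots,\ell$, the colours $B,R$ become $X,Y$, and the role of the ``non-red and non-black'' cards is taken by the cards whose colour is neither $X$ nor $Y$. For player $j$ there are exactly $(\sum_{o=1}^k p_{o,j})-x_j-y_j=s-x_j-y_j$ such ``other'' cards, and only this count (not the individual other colours) enters the computation, so having several other colours causes no extra difficulty.

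First I would establish the per-permutation identity, the analogue of \eqref{4}: for a fixed rearrangement $D'$,
\[
 Z(D',X,Y)=\sum_{m=1}^{\ell s}\bigl(\ell s+1-2m+O_{<m}-O_{>m}\bigr)\,\mathbf{1}\{\text{position }m\text{ is }X\text{ in }D'\},
\]
where $O_{<m}$ and $O_{>m}$ count the ``other'' cards of $D'$ strictly before and strictly after position $m$. The proof is unchanged: starting from an all-$Y$ deck (where $Z(\cdot,X,Y)=0$), flipping the ``other'' cards of $D'$ into place leaves $Z(\cdot,X,Y)$ untouched, and flipping each $Y$ into an $X$ at a position $m$ changes $Z(\cdot,X,Y)$ by $\ell s+1-2m+O_{<m}-O_{>m}$, by the same bookkeeping of gained $X$-$Y$ pairs and lost $Y$-$X$ pairs.

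Next, writing $U(m):=O_{<m}-O_{>m}-2m$, I would separate the constant contribution $|D':\dots=\omega|\,(\ell s+1)x$ from the $U(m)$-part, and express the latter through the auxiliary uniform measure, exactly as in \eqref{eq:mainsum}, as
\[
 \sum_{m=1}^{\ell s}|D':\dots=\omega,\ \text{position }m\text{ is }X|\cdot\mathbf{E}\bigl(U(m)\mid\text{position }m\text{ is }X,\ \omega\bigr).
\]
By the analogue of \eqref{eq:iind}, if the dealing method sends position $m$ to player $i$ then the counting factor is $|D':\dots=\omega|\cdot x_i/s$; by the analogue of \eqref{eq:jind}, $\mathbf{P}(\text{position }n\text{ is other}\mid\cdots)$ equals $(s-x_i-y_i)/(s-1)$ when $n$ also goes to player $i$, and $(s-x_j-y_j)/s$ when $n$ goes to a different player $j$.

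The heart of the computation, and the step I expect to be the main obstacle, is the reassembly after summing over all positions assigned to one fixed player $i$; write these as $i_1<\cdots<i_s$. Two facts must be identified. First, the same-player part of $\mathbf{E}(U(i_t)\mid\cdots)$ equals $\frac{s-x_i-y_i}{s-1}(2t-1-s)$, and since $\sum_{t=1}^s(2t-1-s)=0$ these terms cancel identically, leaving from $U$ only the explicit $-2m$ piece, which contributes $-2\sum_{t=1}^s i_t$. Second, for each other player $j$ the antisymmetric combination
\[
 \sum_{t=1}^s\bigl(\#\{n\text{ goes to }j:\ n<i_t\}-\#\{n\text{ goes to }j:\ n>i_t\}\bigr)
\]
counts the $j$-$i$ pairs minus the $i$-$j$ pairs in the dealing sequence, i.e.\ it is precisely $Z(j,i)$ of \eqref{eq:thmz}, contributing $(s-x_j-y_j)Z(j,i)/s$. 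Pulling out the common factor $x_i/s$ and summing over all players $i$ gives the first displayed formula of the proposition. The second form then follows by purely algebraic manipulation, using $x=\sum_i x_i$ and distributing the factor $1/s$ through the bracket.
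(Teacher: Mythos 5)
Your proposal is correct and follows essentially the same route as the paper, which proves this proposition by carrying over the argument of Proposition \ref{6} verbatim (the per-permutation identity \eqref{4}, the conditional-expectation decomposition, and the identification of the antisymmetric position counts with $Z(j,i)$), with the only new observation being that all colours other than $X$ and $Y$ enter only through their total count $s-x_j-y_j$ per player. Your cancellation of the same-player terms via $\sum_{t=1}^s(2t-1-s)=0$ is just the computation behind the paper's remark that $Z(N,N)=0$, so the two arguments coincide.
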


We are now ready to compare the three famous dealing methods in terms of our first-order approximation \eqref{5}.
\begin{theorem}\label{tm:sfactor}
The coefficient of $a^{-1}$ is exactly $s$ times larger in the ordered dealing than in the cyclic dealing for every possible $k$ and $\ell$ values. If $s$ is even then the coefficient is 0 in the back and forth dealing. If $s$ is odd then the coefficient is exactly $s$ times smaller in the back and forth dealing than in the cyclic dealing for every possible $k$ and $\ell$ values.
\end{theorem}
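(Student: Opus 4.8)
The plan is to reduce the comparison of the three dealing methods to the behaviour of the two dealing-dependent quantities that enter the $\ell$-player Proposition: the position sums $\sum_{t=1}^{s}i_t$, which I package into the \emph{imbalance} $\alpha_i:=\ell s+1-\frac{2}{s}\sum_{t=1}^{s}i_t$, and the pair-counts $Z(j,i)$ of the dealing sequence from \eqref{eq:thmz}. Writing $M(\omega):=|D':(\bar{p_1},\dots,\bar{p_k})=\omega|$, which is manifestly independent of the dealing method, and using that every player receives exactly $s$ cards so that $\sum_{o=1}^{k}p_{o,j}=s$, that Proposition reads
\[
\sum_{D'}Z(D',X,Y)=M(\omega)\,T_{XY}(\omega),\qquad T_{XY}(\omega)=\sum_{i=1}^{\ell}x_i\Bigl(\alpha_i+\frac{1}{s^2}\sum_{j\neq i}(s-x_j-y_j)\,Z(j,i)\Bigr).
\]
Because the prefactor $\frac{s}{(\ell s)!}\prod_{j}p_j!$ in \eqref{5} is likewise dealing-independent, the whole theorem reduces to comparing, across methods, the quantity
\[
C=\sum_{\omega\in\Omega}M(\omega)\,\Bigl|\sum_{i=1}^{k-1}\frac{T_{P_iP_{i+1}}(\omega)}{p_i\,p_{i+1}}\Bigr|.
\]

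The first thing I would isolate is a scaling lemma: if two methods $A,B$ satisfy $T^{A}_{XY}(\omega)=\lambda\,T^{B}_{XY}(\omega)$ for one constant $\lambda>0$, simultaneously for all hands $\omega$ and all colour pairs, then the inner sum scales by $\lambda$ and, since $M(\omega)\geq 0$, the factor pulls straight out of the absolute value to give $C^{A}=\lambda\,C^{B}$. Now $T_{XY}$ is \emph{linear} in the pair $\bigl(\alpha_i,\,Z(j,i)\bigr)$ with coefficients ($x_i$ and $\frac{1}{s^2}(s-x_j-y_j)x_i$) that are the same for every dealing method; moreover $\alpha_i$ and $Z(j,i)$ do not depend on which colours $X,Y$ are considered. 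Hence it suffices to prove that \emph{both} $\alpha_i$ and $Z(j,i)$ get multiplied by the \emph{same} factor $\lambda$ when passing between two methods. This common-factor phenomenon is the crux of the argument.

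I would then compute the two ingredients method by method. The position sums are a direct arithmetic-progression calculation: ordered dealing gives $\alpha_i=s(\ell+1-2i)$, cyclic gives $\alpha_i=\ell+1-2i$, and back-and-forth gives $\alpha_i=0$ for even $s$ and $\alpha_i=(\ell+1-2i)/s$ for odd $s$; the even case is transparent because the passes group into forward--backward pairs within which each player's two positions sum to a constant that is independent of the player. For the pair-counts I would use a uniform \emph{pass} decomposition: cyclic and back-and-forth write the deck as $s$ passes of length $\ell$, and for fixed players $j<i$ the ordered pair of passes $(r,r')$ carrying $j$'s and $i$'s cards contributes a $j$-$i$ pair when $r<r'$, an $i$-$j$ pair when $r>r'$, and (when $r=r'$) a $j$-$i$ or $i$-$j$ pair according as that common pass runs forward or backward. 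The cross-pass contributions cancel, leaving $Z(j,i)=(\#\text{forward}-\#\text{backward})\cdot\mathrm{sgn}(i-j)$, i.e.\ $s\,\mathrm{sgn}(i-j)$ for cyclic and $\bigl(\lceil s/2\rceil-\lfloor s/2\rfloor\bigr)\mathrm{sgn}(i-j)$ for back-and-forth, which is $0$ for even $s$ and $\mathrm{sgn}(i-j)$ for odd $s$; ordered dealing is handled directly, all of $j$'s cards preceding all of $i$'s, giving $Z(j,i)=s^2\,\mathrm{sgn}(i-j)$.

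Assembling these, the required ratios fall out. Between ordered and cyclic one has $\alpha^{\mathrm{ord}}_i=s\,\alpha^{\mathrm{cyc}}_i$ and $Z^{\mathrm{ord}}(j,i)=s\,Z^{\mathrm{cyc}}(j,i)$, so $\lambda=s$ and $C^{\mathrm{ord}}=s\,C^{\mathrm{cyc}}$. For even $s$ both $\alpha^{\mathrm{bf}}_i$ and $Z^{\mathrm{bf}}(j,i)$ vanish, hence $T^{\mathrm{bf}}_{XY}\equiv0$ and the coefficient of $a^{-1}$ is $0$. For odd $s$ one has $\alpha^{\mathrm{bf}}_i=\frac{1}{s}\alpha^{\mathrm{cyc}}_i$ and $Z^{\mathrm{bf}}(j,i)=\frac{1}{s}Z^{\mathrm{cyc}}(j,i)$, giving $\lambda=1/s$ and $C^{\mathrm{cyc}}=s\,C^{\mathrm{bf}}$; all of this holds for arbitrary $k$ and $\ell$ since the scaling is pointwise in $\omega$. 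The step I expect to be the main obstacle is the back-and-forth pair-count for general $\ell$ and $s$: getting the cross-pass cancellation and the delicate $r=r'$ bookkeeping right beyond the worked four-player case, and in particular verifying that the \emph{same} factor governs $\alpha_i$ and $Z(j,i)$, which is exactly what lets $\lambda$ leave the absolute value and makes the clean factorization $T^{A}=\lambda\,T^{B}$ work.
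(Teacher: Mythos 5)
Your proposal is correct and follows essentially the same route as the paper: isolate the two dealing-dependent ingredients $\alpha_i=\ell s+1-\frac{2}{s}\sum_t i_t$ and $Z(j,i)$, compute both for each method, and observe that they are multiplied by the \emph{same} factor $s$ (or vanish for even-$s$ back-and-forth), so the factor pulls out of the absolute value in the coefficient of $a^{-1}$. Your pass-by-pass computation of $Z(j,i)$ (cross-pass cancellation plus forward-minus-backward count on the diagonal) is a slightly cleaner packaging than the paper's direct count and symmetric-group argument, but it yields the identical values $s^2$, $s$, $1$, $0$ and is not a substantively different proof.
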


\begin{proof}
In the ordered dealing, without loss of generality, suppose that player $j$ receives all his cards before player $i$ receives his first card. Then in the representing sequence there are $s^2$ $j-i$ pairs and 0 $i-j$ pairs, thus $Z(j,i)=s^2$.

In cyclic dealing, suppose that $j$ receives his first card before player $i$. Then the $p^\text{th}$ position that belongs to player $j$ stands in $j-i$ pair with $s-p+1$ positions that belong to player $i$, $p=1\dots s$. Similarly, the $p^\text{th}$ position that belongs to player $i$ stands in $i-j$ pair with $s-p$ positions that belong to player $j$, $p=1\dots s$. We conclude that $Z(j,i)=s$ in this case.

If $s$ is an even number then the representing sequence of the back and forth dealing is symmetric, hence $Z(j,i)=0$.

If $s$ is an odd number then in the representing sequence of the back and forth dealing let us call the first $\ell s-\ell$ positions the first group, the last $\ell$ positions the second group. The first group is symmetric, hence within the first group positions do not contribute to $Z(j,i)$. In the first group the $s-1$ positions that belong to player $j$ stand in $j-i$ pairs with the position which belongs to player $i$ in the second group, and the $s-1$ positions that belong to player $i$ stand in $i-j$ pairs with the position that belongs to player $j$ in the second group. Suppose again that player $j$ receives his first card before player $i$. Then in the second group the position that belongs to player $j$ stands in $j-i$ pair with the position that belongs to player $i$ in the second group. Thus we have $Z(j,i)=1$.

Summarizing, we have
\begin{itemize}
 \item Ordered dealing: $Z(j,i)=s^2 \cdot I_{j,i}$.
 \item Cyclic dealing: $Z(j,i)=s \cdot I_{j,i}$,
 \item Back and forth: $Z(j,i)=I_{j,i}$, if $s$ is an odd number.
 \item Back and forth: $Z(j,i)=0$, if $s$ is an even number,
\end{itemize}
where
\[
 I_{j,i}:=
 \begin{cases}
  1,&\text{player }j\text{ receives his first card before player }i\\
  -1,&\text{player }i\text{ receives his first card before player }j.
 \end{cases}
\]
This holds true for each pair $(i, j)$ of players, therefore the same holds for
\[
 \sum_{j=1, j\neq i}^{\ell}\frac{((\sum_{o=1}^k p_{o,j})-x_{j}-y_{j}) \cdot Z(j,i)}{s^2}:
\]
this is $s$ times larger for the ordered dealing than for the cyclic dealing, and this sum is $s$ times larger for the cyclic dealing than for the back and forth dealing with odd $s$ values, while the sum is zero for the back and forth dealing with even $s$ values.

Next we analyze the term $2\frac{\sum_{t=1}^{s}i_t}{s}$.

In the ordered dealing the positions $(i-1)s+1$, $(i-1)s+2,\cdots$, $is$ belong to the $i^{th}$ player, so we have to sum up these positions when we compute $\sum_{t=1}^{s}i_t$.
\[
 2\frac{\sum_{t=1}^{s}i_t}{s}=2\frac{((i-1)s+1+is)s}{2s}=2is-s+1.
\]

For cyclic dealing the positions $i$, $\ell+i, 2\ell+i, (s-1)\ell+i$ belong to the $i^{th}$ player, summing up these positions in $\sum_{t=1}^{s}i_t$ we have
\[
 2\frac{\sum_{t=1}^{s}i_t}{s}=2\frac{(2i+\ell(s-1))s}{2s}=2i+\ell(s-1).
\]

In the back and forth dealing  with even $s$, the positions $i$, $2\ell-(i-1), 2\ell+i, 4\ell-(i-1), 4\ell+i, 6\ell-(i-1), \cdots, \ell s-(i-1)$ belong to the $i^{th}$ player, summing up these positions for $\sum_{t=1}^{s}i_t$:
\[
2\frac{\sum_{t=1}^{s}i_t}{s}=2\frac{(\ell s+1)s}{2s}=(\ell s+1).
\]
If $s$ is an odd number, then positions $i$, $2\ell-(i-1), 2\ell+i, 4\ell-(i-1), 4\ell+i, 6\ell-(i-1), \cdots, \ell s-(\ell-i)$ belong to the $i^{th}$ player, and
\[
 2\frac{\sum_{t=1}^{s}i_t}{s}=\frac{2}{s}\Bigl(\frac{(1+\ell(s-1))(s-1)}{2}+\ell(s-1)+i\Bigl)=\frac{1}{s}(\ell s^{2}+s-1+2i-\ell).
\]
Therefore, we have
\[
 \ell s+1-2\frac{\sum_{t=1}^{s}i_t}{s}=s(\ell-2i+1)
\]
for the ordered dealing,
\[
 \ell s+1-2\frac{\sum_{t=1}^{s}i_t}{s}=(\ell-2i+1).
\]
for the cyclic dealing, and
\[
 \begin{aligned}
  \ell s+1-2\frac{\sum_{t=1}^{s}i_t}{s}&=0,\qquad&&s\text{ even,}\\
  \ell s+1-2\frac{\sum_{t=1}^{s}i_t}{s}&=\frac{1}{s}(\ell-2i+1),\qquad&&s\text{ odd}
 \end{aligned}
\]
in the back and forth dealing.

Thus we see that these terms also differ by factors of $s$ when comparing the ordered, cyclic, and back and forth dealing methods (odd $s$ values), while this term is also 0 for the back and forth dealing if $s$ is an even number. We have proved the claim for each term in the sum
\[
 \sum_{D':(\bar{p_1}, \bar{p_2}, \cdots, \bar{p_k})=\omega}Z(D',X, Y).
\]
which completes the argument.
\end{proof}
\section{The case of arbitrary initial deck}

Now, we suppose that the initial deck is arbitrary. In this case the variation distance is the following:
\begin{multline*}
 \frac{1}{2}\sum_{\omega \in{\Omega}}\left|\mathbb{P}_a(\omega)-\Pi(\omega)\right|\\
 \begin{aligned}
  &=a^{-1}\frac{s}{(\ell s)!}\Biggl(\prod_{j=1}^{k}p_j!\Biggl)\sum_{\omega \in{\Omega}}\left|\sum_{a<b}\frac{W(D,a,b)\sum_{D':(\bar{p_1}, \bar{p_2}, \cdots, \bar{p_k})=\omega}Z(D',a,b)} {n_a\cdot n_b}\right|\\
  &\quad+O(a^{-2}).
 \end{aligned}
\end{multline*}
The only term that depends on the dealing method is
\[
 \sum_{D':(\bar{p_1}, \bar{p_2}, \cdots, \bar{p_k})=\omega}Z(D',a,b).
\]
The proof of (\ref{tm:sfactor}) did not depend on the initial deck, hence that theorem extends to the case of an arbitrary initial deck.

\section{The case of two or three different types of cards}
The purpose of this section is to gain some quantitative insight on how the leading term of the variation distance behaves in the case of repetitive cards.
\subsection{Two different types of cards}
We now consider 52 cards in the deck, each either red or black, and four players. Let $b$ be the number of black cards in the deck. Using the computer and our formulas we are able to compute the coefficient of $a^{-1}$ for any possible value $b$. Applying (\ref{6}) the coefficient of $a^{-1}$ in \eqref{5} becomes
\[
 \begin{aligned}
&\ \frac{13}{b(52-b)\binom{52}{b}}\sum_{\omega \in \Omega}\Biggl|\binom{13}{b_N}\binom{13}{b_E}\binom{13}{b_S}\binom{13}{b_W}\Biggl(53b\\
&\ -\frac{2b_N}{13}\sum_{p=1}^{13}N_p-\frac{2b_E}{13}\sum_{p=1}^{13}E_p-\frac{2b_S}{13}\sum_{p=1}^{13}S_p-\frac{2b_W}{13}\sum_{p=1}^{13}W_p\Biggl)\Biggl|.
\end{aligned}
\]
The values of the last four sums are easily computed for a dealing method. The numerical values are 91; 260; 429; 598 for the ordered dealing, 325; 338; 351; 364 for the cyclic dealing, and 343; 344; 345; 346 for the back and forth dealing. We see that the values differ the least in the back and forth dealing and the most in the ordered dealing. In this sense the best of these dealing methods is the back and forth dealing, and the worst is the ordered dealing. We think that dealing method $A$ is better than dealing method $B$ if the following value is smaller in $A$ than in $B$:
\[
 |\sum_{p=1}^{13}N_p-344,5|+|\sum_{p=1}^{13}E_p-344,5|+|\sum_{p=1}^{13}S_p-344,5|+|\sum_{p=1}^{13}W_p-344,5|.
\]
Our conjecture is that the best dealing method is a dealing method in which two of the sums equal 344 and the other two equal 345. As an example, consider
\begin{multline}
 SNEWWSENNEWSWSENNESWWSENNESWWSEN\\
 NESWWSENNESWWSENNESW.\label{eq:dealg}
\end{multline}
Indeed, the graph \ref{s5} illustrates that this dealing method has approximately half the coefficient than that of the back and forth dealing for each value $b$.
\begin{figure}[ht]
 \begin{center}
   \includegraphics[scale=1.4]{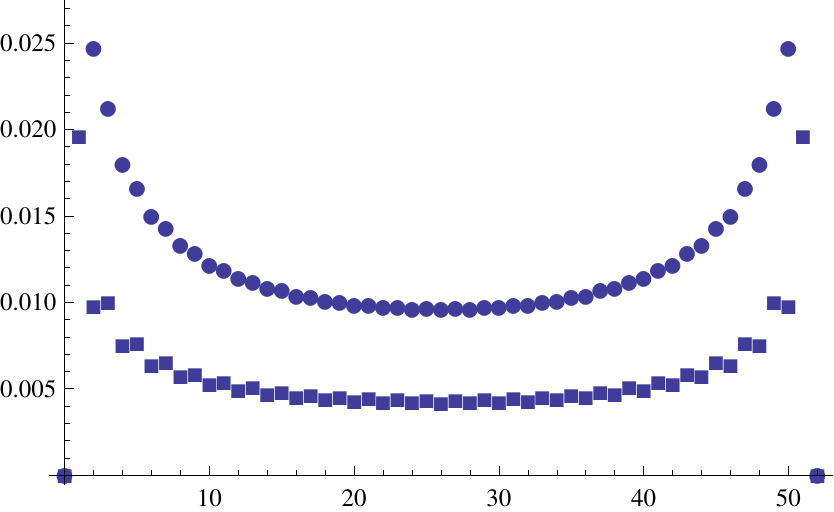}
   \caption{The coefficient of $a^{-1}$ for each possible value $b$ in the back and forth and in our conjectured best dealing method. The horizontal axis marks the value $b$ and the vertical axis marks the coefficient of $a^{-1}$. Squares plot our conjectured best dealing method and circles stand for back and forth. For better illustration we excluded the points $b=1$ and $51$ of the back and forth dealing.}
   \label{s5}
 \end{center}
\end{figure}
That is, for large $a$ we can save circa 1 riffle shuffle if we use this method instead of the back and forth dealing. We note that there are other dealing methods which have the same coefficient of $a^{-1}$ for each value $b$, but our conjecture is that there is no better one for two types of cards.
\subsection{Three different types of cards}
Next we suppose that there are three different types of cards in the deck: black, red and green. The number of cards is 52 and there are four players. Let $b$ be the number of black cards, $r$ be the number of red cards, $g$ be the number of green cards in the deck, $g=52-b-r$. Using the computer and our formula we are able to compute the coefficient of $a^{-1}$ for any possible $b, r$ values. Figure \ref{ize} shows the coefficient of $a^{-1}$ for each possible $b, r$ value in the back and forth dealing method.

\begin{figure}[ht]
 \begin{center}
   \includegraphics[scale=1]{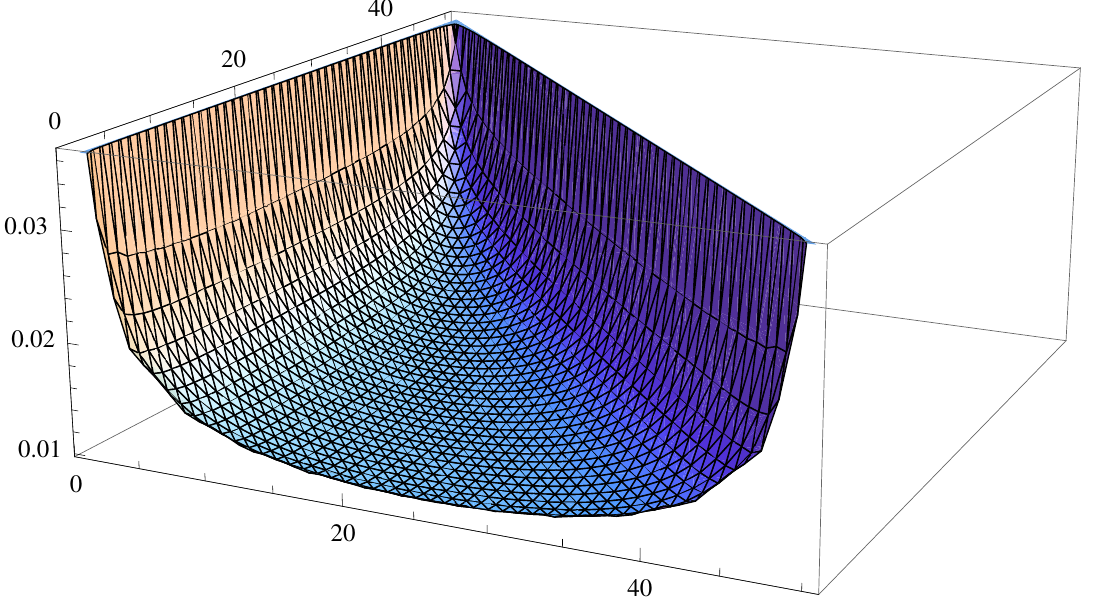}
   \caption{The coefficient of $a^{-1}$ for each possible value $b, r$ in the back and forth dealing method.}
  \label{ize}
 \end{center}
\end{figure}

A main question is whether the dealing method seen in \eqref{eq:dealg} is better than the back and forth dealing in this case. The answer is: not for every configuration. For $b=1, r=1$ the coefficient of $a^{-1}$ is $\frac{56}{1275}$ for the back and forth dealing and $\frac{76}{1275}$ for dealing method \eqref{eq:dealg}. Figure \ref{ize1} shows the coefficient of $a^{-1}$ for each possible $b, r$ values with \eqref{eq:dealg}. In most cases it has a smaller coefficient than the back and forth dealing, but there are some configurations when the back and forth dealing is better. This proves that the conjecture what we drew up for two different types is false for three types.

\begin{figure}[ht]
 \begin{center}
   \includegraphics[scale=1]{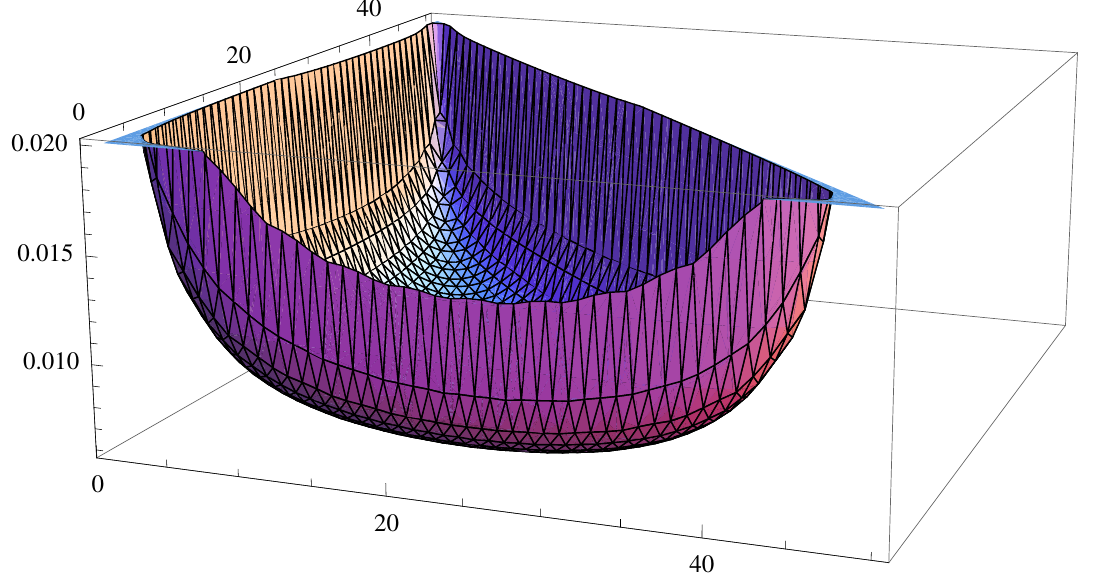}
   \caption{The coefficient of $a^{-1}$ for each possible value $b, r$ in dealing method \eqref{eq:dealg}.}
  \label{ize1}
 \end{center}
\end{figure}



\begin{thebibliography}{}
%
%
\bibitem{cikk2}
D.~Aldous.
\newblock Random walks on finite groups and rapidly mixing markov chains.
\newblock {\em Seminar on probability}, 986:243--297, 1983.

\bibitem{cikk5}
D.~Bayer and P.~Diaconis.
\newblock Trailing the dovetail shuffle to its lair.
\newblock {\em Ann. Appl. Probab.}, 2:294--313, 1992.

\bibitem{cikk1}
M.~Conger and J.~Howald.
\newblock A better way to deal the cards.
\newblock {\em American Mathematical Monthly}, 117:686--700, 2010.

\bibitem{cikk6}
M.~Conger and D.~Viswanath.
\newblock Riffle shuffles of decks with repeated cards.
\newblock {\em Annals of Probability}, 34:804--819, 2006.

\bibitem{cikk7}
S.~Assaf, P.~Diaconis and K.~Soundararajan
\newblock A rule of thumb for riffle shuffling.
\newblock {\em Ann. Appl. Probab.}, 21:843:875, 2011.

\bibitem{konyv}
E.~N. Gilbert.
\newblock {\em Theory of shuffling}.
\newblock Tech. Report MM-55-114-44, 1955.

\bibitem{cikk3}
J.~Reeds.
\newblock {\em Unpublished manuscript}.
\newblock 1981.
\end{thebibliography}
\end{document}